\documentclass{amsart}

\usepackage{graphicx,enumerate}
\usepackage{amscd}
\usepackage{amsmath, amssymb, amsfonts}
\usepackage{import}
\usepackage{tikz}

\usepackage[
backend=biber,
style=alphabetic,
sorting=ynt,
url = false
]{biblatex}

\newtheorem{theorem}{Theorem}[section]

\newtheorem{corollary}[theorem]{Corollary}
\newtheorem{proposition}[theorem]{Proposition}
\newtheorem{problem}[theorem]{Problem}
\newtheorem{conjecture}[theorem]{Conjecture}

\theoremstyle{definition}
\newtheorem{definition}[theorem]{Definition}

\theoremstyle{remark}

\usepackage{xcolor}

\numberwithin{equation}{section}

\begin{document}

\title{The realization problem of essential surfaces in knot exteriors}

\author{Makoto Ozawa}
\address{Department of Natural Sciences, Faculty of Arts and Sciences, Komazawa University, 1-23-1 Komazawa, Setagaya-ku, Tokyo, 154-8525, Japan}
\email{w3c@komazawa-u.ac.jp}
\thanks{The first author is partially supported by Grant-in-Aid for Scientific Research (C) (No. 17K05262), The Ministry of Education, Culture, Sports, Science and Technology, Japan}


\author{Jes\'{u}s Rodr\'{i}guez-Viorato}
\address{Centro de Investigaci\'{o}n en Matem\'{a}ticas, A.C. 402, 36000 Guanajuato, Gto, M\'{e}xico}
\email{jesusr@cimat.mx }

\subjclass[2020]{Primary 57M15; Secondary 05C35}



\keywords{boundary slope, essential surface, knot exterior, Dehn surgery, cabling conjecture}

\begin{abstract}
We study compact orientable essential surfaces in knot exteriors in the 3-sphere. The genus $g$, the number of boundary components $b$, and the boundary slope $p/q$ are fundamental invariants of an essential surface. The \textit{realization problem} asks whether, for a given triple $(g, b, q)$ with $g \ge 0$, $b \ge 1$, and $q \ge 1$, there exists a knot $K \subset S^3$ whose exterior $E(K)$ contains a compact orientable essential surface $F$ of genus $g$ with $b$ boundary components and boundary slope $p/q$ for some $p$.

In general, not all combinations of $(g, b, q)$ are realizable. First, we show that if $b$ is odd, then $q$ must be equal to $1$. Our main theorem states that for any given even $b \ge 2$ and $q \ge 1$, there exist a genus $g \ge 0$ and a knot $K$ such that $E(K)$ contains a compact orientable essential surface with these parameters.
\end{abstract}

\maketitle

\section{Introduction}

Let $K$ be a knot in the $3$-sphere $S^3$. The \textit{knot exterior}, denoted by $E(K)$, is the compact orientable $3$-manifold defined by $E(K) = S^3 - \mathrm{int}\,N(K)$, where $N(K)$ is a regular neighborhood of $K$. A fundamental result of Gordon--Luecke \cite{GL} asserts that knots in $S^3$ are completely determined by their exteriors: two knots are ambient isotopic if and only if their exteriors are homeomorphic.

Another cornerstone in knot theory is Thurston's hyperbolization theorem \cite{T}, which provides a geometric classification of knots into four mutually exclusive types: the trivial knot, torus knots, satellite knots, and hyperbolic knots. This classification is closely linked to the existence of \textit{essential surfaces} in the knot exterior. A compact orientable surface $F$ properly embedded in $E(K)$ is said to be essential if it is incompressible, boundary-incompressible, and not boundary-parallel.

Among the extrinsic features of essential surfaces, the notion of \textit{boundary slope} plays a central role. Let $S$ be a Seifert surface of $K$. The isotopy class of $\partial (S \cap E(K))$ on $\partial E(K)$ is termed a \textit{longitude}, while the isotopy class of an essential loop on $\partial E(K)$ that bounds a disk in $N(K)$ is termed a \textit{meridian}. Upon fixing a meridian-longitude pair $(m, l)$, the isotopy class of any essential loop $\alpha$ on $\partial E(K)$ corresponds to a homology class $[\alpha] = p[m] + q[l] \in H_1(\partial E(K))$. We identify this class with the slope $p/q \in \mathbb{Q} \cup \{\infty\}$, where $\infty$ corresponds to $1/0$.

If $F$ is a compact orientable essential surface with non-empty boundary, each boundary component of $F$ determines the same slope $p/q$, known as the boundary slope of $F$. The set of all boundary slopes for a knot $K$ is denoted by $\mathcal{B}(K)$. For instance, $\mathcal{B}(K)=\{0\}$ for the trivial knot, while $\mathcal{B}(K)=\{0, pq\}$ for the $(p, q)$-torus knot. Hatcher \cite{H} proved that $\mathcal{B}(K)$ is always a finite set.

Essential surfaces are characterized by both intrinsic data, the genus $g(F)$ and the number of boundary components $b = |\partial F|$, and extrinsic data, namely the boundary slope. In this paper, we investigate the interplay between these three quantities by considering the following realization problem:

\begin{problem}[Realization Problem]\label{fundamental}
Given integers $g \ge 0$, $b \ge 1$, and $q \ge 1$, does there exist a knot $K \subset S^3$ such that $E(K)$ contains a compact orientable essential surface $F$ of genus $g$ with $b$ boundary components and boundary slope $p/q$ for some integer $p$? 
\end{problem}

It is well-known that not all triples $(g, b, q)$ are realizable. For $b=1$, homological constraints force the boundary slope to be $0/1$ (i.e., $q=1$), implying that $F$ must be a Seifert surface. Thus, triples $(g, 1, 1)$ are realizable for all $g \ge 0$, while $(g, 1, q)$ with $q \ge 2$ are not. For planar surfaces ($g=0$), Gordon--Luecke \cite{GL87} showed that $q$ must be $1$. Furthermore, if $g=0$ and $b$ is odd, then $b=1$ and $K$ must be the trivial knot. For $g=0$ and $b=2$, $K$ must be a cable knot. Gordon \cite{KO} also proved that the triple $(0, 4, 1)$ is not realizable. These constraints motivate the following conjecture:

\begin{conjecture}[Strong Cabling Conjecture]\label{strong}
For any even integer $b \ge 6$, the triple $(0, b, 1)$ is not realizable.
\end{conjecture}


If one considers arbitrary ambient $3$-manifolds, every triple $(g, b, q)$ is realizable. Thus, the realization problem specifically highlights the topological constraints imposed by the $3$-sphere. Quantitative bounds on the Euler characteristic $\chi(F)$ are known for specific classes: alternating knots satisfy $-\chi(F) \ge \frac{1}{8}bq(n+2)$ \cite{MT}, and Montesinos knots generally satisfy $-\chi(F) \ge bq$ \cite{IM}. These suggest a universal bound of the form $-\chi(F) \ge f(b, q)$.

\subsection{Results}

Let $F$ be a compact orientable essential surface in $E(K)$. We first establish that if $b$ is odd, $F$ must be non-separating in $E(K)$. Since non-separating essential surfaces in knot exteriors necessarily have boundary slope zero ($q=1$), it follows that $q$ must be $1$ whenever $b$ is odd.

While it is known that $(g, b, 1)$ is realizable for some $g$ when $b$ is odd, our main result provides a comprehensive existence theorem for the remaining cases:

\begin{theorem}\label{main}
For any even integer $b \ge 2$ and any integer $q \ge 1$, there exists an integer $g \ge 0$ such that the triple $(g, b, q)$ is realizable.
\end{theorem}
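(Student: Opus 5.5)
We construct, for each even $b=2n\ge 2$ and each $q\ge 1$, an explicit knot whose exterior contains a separating essential surface with $b$ boundary components of slope $p/q$. For $n=1$ and any $q$ we use cable knots: for a nontrivial knot $J$ and coprime $(p,q)$ with $q\ge 2$, the cabling annulus in the exterior of the $(p,q)$-cable of $J$ is essential. It has two boundary components of slope $pq/1$ in the cable's framing, so $q=1$ there. So cabling alone does not give $q\ge 2$ for planar surfaces, in accordance with Gordon--Luecke \cite{GL87}. We therefore allow positive genus and take surfaces coming from a satellite construction.

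The plan is as follows. Take a hyperbolic (or torus) knot $J$ whose exterior contains an essential surface $S$ with boundary slope $r/s$ on $\partial E(J)$ with $s\ge 1$. Now form a satellite $K$ with companion $J$ and pattern a knot $P$ in a solid torus $V$ of winding number $w\neq 0$.

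The surface $F$ is built in two pieces. In $E(J)$ we take $m$ parallel copies of a surface $S$ of slope $r/s$. In $V-\mathrm{int}\,N(P)$ we take an essential surface $Q$ that meets $\partial V$ in curves of slope $r/s$ and meets $\partial N(P)$ in $b$ curves. We then glue along $\partial V$. The slope of $\partial F$ on $\partial N(K)$ is computed from the homological relation in $V$: the class $q$ of each boundary curve on $\partial N(P)$ times $b$ must equal $w$ times the meridional count on $\partial V$. By choosing $P$ to be a $(w,1)$-type cable or a braided pattern, we can arrange that the denominator of the resulting slope is the prescribed $q$. The simplest version is to let $P$ be the $(a,c)$-cable pattern and $Q$ be a union of cabling annuli tubed appropriately. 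We choose $J$, $a$, $c$ so that each component of $\partial F$ has slope with denominator exactly $q$ and the number of components is exactly $b$. Evenness of $b$ is necessary here by the odd case established above. It will also appear naturally, because a separating $F$ meets $\partial E(K)$ in an even number of curves.

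The key steps, in order, are these.

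\begin{enumerate}[(i)]
\item Fix the pattern and companion, and write $b=2n$.
\item Build $F$ by cut-and-paste and compute its boundary slope and $b$.
\item Verify that $F$ is incompressible and boundary-incompressible. This uses the standard gluing lemma: a surface that is essential in each piece of a decomposition along an incompressible torus, and that meets the torus in essential curves, is essential. We may need to check that no component of $F\cap \partial V$ bounds a disk or annulus that allows a compression.
\item Compute the genus $g$ from the Euler characteristic. Since the theorem only asserts existence of some $g$, we need no control on $g$.
\end{enumerate}

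The main obstacle is step (iii), and more specifically the compatibility of the slopes. We must ensure the surface in $E(J)$ and the surface in $V-N(P)$ match along $\partial V$ with the same number of curves, while the surface in the pattern space stays essential (not boundary parallel, no annulus components that could be isotoped off). My first attempt would be to take $S$ to be a Seifert surface of $J$, so that slope $0$ is on $\partial V$. We would then use for $Q$ a surface in the cable space realizing slope $p/q$ on the inner boundary, such as the essential surfaces in the cable space classified via their slopes. If this fails to produce a denominator of $q$, we fall back to using $J$ a Montesinos knot with a known boundary slope of denominator dividing $q$, as in \cite{IM}.
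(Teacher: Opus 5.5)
There is a genuine gap. The step that carries all the content of the theorem is producing a slope whose denominator is exactly $q\ge 2$, with exactly $b$ boundary curves. You leave that step as ``we can arrange'', and the concrete choices you suggest cannot deliver it.

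Take your cable case: $K$ is the $(c,w)$-cable of $J$ with $w\ge 2$, and $C=V-\mathrm{int}\,N(K)$ is the cable space. $C$ is Seifert fibered over an annulus with one cone point, so each component of $F\cap C$ is (after the usual isotopy) vertical or horizontal.
\begin{itemize}
\item Vertical components meeting $\partial N(K)$ are cabling annuli, which have integral slope $cw$.
\item A connected oriented horizontal component is crossed by the coherently oriented fibers always with the same sign. So its boundary curves on each torus are coherently oriented. The relations $\mu_J=w\mu_K$ and $\lambda_K=w\lambda_J$ in $H_1(C)$ then force slope $w^2r$ on $\partial N(K)$ whenever the slope on $\partial V$ is $r$.
\end{itemize}
With $S$ a Seifert surface of $J$ this gives slope $0$. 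In general the denominator of $w^2r$ divides that of $r$, so cabling never creates a new denominator.

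Your fallback therefore needs a companion $J$ that already has a boundary slope whose denominator is a multiple of $q$. That is essentially the statement you are trying to prove, minus control of $b$. The reference \cite{IM} supplies Euler characteristic bounds, not such knots. For a non-cable pattern, you would have to exhibit an essential surface in $V-\mathrm{int}\,N(P)$ with non-integral slope and $b$ curves on $\partial N(P)$, and you give no candidate.

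Your homological slope computation also fails exactly where it is needed. For $q\ge 2$ the surface is separating (Proposition~\ref{prop:non-separating}). So the curves on $\partial N(K)$, and likewise on $\partial V$, come in oppositely oriented pairs. All total classes then vanish, the relation you write reduces to $0=0$, and homology says nothing about the slope.

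The paper avoids satellites altogether and works directly with pretzel knots via Hatcher--Oertel edgepath systems \cite{HO}. For even $n$ it uses $P(-(n+1),\,n+1,\,2n^2q-n+1)$, and a second family covers odd $n$. Solving the linear consistency equations gives an edgepath system with $2nq$ sheets and total twist $\equiv 1/q \pmod 1$. Hence the slope has denominator exactly $q$, and the surface has $2nq/q=2n$ boundary curves. Incompressibility follows from the $r$-value criterion of \cite{HO}. An explicit source of fractional slopes, with the denominator and boundary count readable from arithmetic, is the ingredient your plan lacks.
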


Recent work by Ishikawa, Mattman, and Shimokawa \cite{IMS} showed that the denominator $q$ can be arbitrarily large for alternating knots. However, their results do not prescribe the genus or boundary components. By contrast, this paper elucidates how the parity of $b$ and the denominator $q$ jointly constrain the existence of essential surfaces, providing a clearer picture of the realization problem.

\section{Preliminaries}
We begin with two general results.

\begin{proposition}\label{prop:odd}
If $b$ is odd, $F$ must be non-separating in $E(K)$.
\end{proposition}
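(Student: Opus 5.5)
We argue by contraposition. Suppose $F$ is separating in $E(K)$, and show that $b=|\partial F|$ is even. The idea is to look at how $\partial F$ sits on the torus $T=\partial E(K)$ and use the two sides of $F$.

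First, since $F$ is essential, each component of $\partial F$ is an essential simple closed curve on $T$. All components of $\partial F$ are disjoint, so they are parallel curves with a common slope. Hence they cut $T$ into $b$ annuli $A_1,\dots,A_b$, arranged cyclically, with $A_i$ and $A_{i+1}$ meeting along a component of $\partial F$ (indices mod $b$).

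Next, since $F$ separates $E(K)$, write $E(K)-F=M_1\sqcup M_2$ with $M_1\neq M_2$. Here I use that $F$ is two-sided, being orientable in an orientable manifold. Each annulus $A_i$ lies in exactly one of $M_1,M_2$. Near a boundary curve $c$ of $F$, a collar neighborhood of $c$ in $E(K)$ is cut by $F$ into two half-collars. These lie on opposite sides of $F$, and they meet $T$ in the two annuli adjacent to $c$. So adjacent annuli $A_i$ and $A_{i+1}$ lie in different components $M_1$ and $M_2$. This is the one point that needs care, and it is exactly where separation is used. If $F$ were non-separating, the two local sides could belong to the same complementary region.

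Finally, the labels $M_1,M_2$ therefore alternate around the cycle $A_1,\dots,A_b$. A cyclic alternating 2-colouring of a $b$-cycle exists only when $b$ is even. Thus a separating $F$ has $b$ even, and so if $b$ is odd, $F$ is non-separating.

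An alternative homological route gives the same conclusion. If $F$ separates, then $[\partial F]=0$ in $H_1(E(K))$. Choose orientations so that $F$ is the boundary of $M_1$ (oriented). Then the boundary curves of $F$ carry orientations inducing alternating signs on $T$, so $[\partial F]=0$ in $H_1(T)$, which again forces $b$ to be even. I would present the combinatorial alternation argument, since it is cleaner.
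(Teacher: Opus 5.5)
Your proof is correct and is essentially the paper's argument. The paper picks a curve $c$ on $\partial E(K)$ meeting each component of $\partial F$ once, and shows that if no subarc $c_i$ joins opposite sides of $F$, then the intersection signs alternate, forcing $b$ even. That is exactly your alternation of sides around the cycle of annuli $A_i$ (the arcs $c_i$ lie in these annuli), phrased directly rather than by contraposition. As in the paper, the step placing the two half-collars at a boundary curve in different components $M_1, M_2$ tacitly uses that $F$ is connected, so that each side of $F$ lies in a single complementary region.
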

\begin{proof}
Notice that all components of $\partial F$ are parallel essential curves on the torus $\partial E(K)$.
Therefore, we can choose a curve $c$ transverse to $\partial F$ that intersects each boundary component exactly once.
After fixing orientations of $c$ and $F$, we assign a sign to each point of intersection in $F \cap c$:
an intersection is positive if $c$ is parallel to the normal vector of $F$ (see Figure~\ref{fig:positive-orientation-c}),
and negative otherwise.

Traversing the curve $c$ and label its successive intersection points with $F$ by
$p_1, p_2, \dots, p_b$.
For each $i = 1, \dots, b$, the subarc $c_i$ of $c$ from $p_i$ to $p_{i+1}$
contains no other intersection points with $F$, where we set $p_{b+1} = p_1$.
If the intersection points $p_i$ and $p_{i+1}$ have the same sign, then the arc $c_i$ connects opposite sides of $F$; in particular, $F$ is non-separating.
Suppose instead that none of the arcs $c_i$ connects opposite sides of $F$.
Then, for each $i$, the points $p_i$ and $p_{i+1}$ must have opposite signs (as illustrated in Figure~\ref{fig:positive-orientation-c}).
As a consequence, the signs at the endpoints of each arc $c_i$ cancel, and we obtain $$0 = \sum_{i=1}^b \bigl(\operatorname{sign}(p_i) + \operatorname{sign}(p_{i+1})\bigr)
  = 2 \sum_{i=1}^b \operatorname{sign}(p_i).
$$ This implies $\sum_{i=1}^b (\pm 1) = 0,$ and hence $b$ must be even, contradicting the assumption that $b$ is odd.

\begin{figure}
    \centering
\begin{tikzpicture}[
    x={(0.9cm,-0.15cm)},  
    y={(0.55cm,0.45cm)},  
    z={(0cm,1cm)},        
    scale=0.7,
    >=stealth,            
    font=\large
]

    \coordinate (E1) at (0,0,0);
    \coordinate (E2) at (6,0,0);
    \coordinate (E3) at (8,4,0);
    \coordinate (E4) at (2,4,0);

    \coordinate (LineStart) at (1, 2, 0);
    \coordinate (LineEnd)   at (7, 2, 0);

    \coordinate (L_Base1) at (2.2, 0.5, 0);
    \coordinate (L_Base2) at (2.8, 3.5, 0);
    \coordinate (L_Top1)  at (2.2, 0.5, 2.5);
    \coordinate (L_Top2)  at (2.8, 3.5, 2.5);
    \coordinate (L_NormalStart) at (2.5, 2.8, 2.0);

    \coordinate (R_Base1) at (5.2, 0.5, 0);
    \coordinate (R_Base2) at (5.8, 3.5, 0);
    \coordinate (R_Top1)  at (5.2, 0.5, 2.5);
    \coordinate (R_Top2)  at (5.8, 3.5, 2.5);
    \coordinate (R_NormalStart) at (5.5, 2.8, 2.0);


    \draw[thick, fill=gray!10] (E1) -- (E2) -- (E3) -- (E4) -- cycle;
    \node[right] at (6.5, 0, 0) {$\partial E$};

    \draw[->, very thick, red!80!black] (LineStart) -- (LineEnd) node[pos=0.97, above, text=red!80!black] {$c$};

    \filldraw[thick, draw=blue!40!black, fill=cyan!20, fill opacity=0.7] 
        (L_Base1) -- (L_Base2) -- (L_Top2) -- (L_Top1) -- cycle;
    
    \draw[->, very thick, blue!70!black] (L_NormalStart) -- ++(1.2, 0, 0);
    
    \node[anchor=north west, black] at (2.5, 2, 0) {$+$};

    \filldraw[thick, draw=blue!40!black, fill=cyan!20, fill opacity=0.7] 
        (R_Base1) -- (R_Base2) -- (R_Top2) -- (R_Top1) -- cycle;

    \node[right, blue!40!black] at (R_Top2) {$F$};

    \draw[->, very thick, blue!70!black] (R_NormalStart) -- ++(-1.2, 0, 0);

    \node[anchor=north west, black] at (5.5, 2, 0) {$-$};

\end{tikzpicture}
    \caption{Positive intersection }
    \label{fig:positive-orientation-c}
\end{figure}
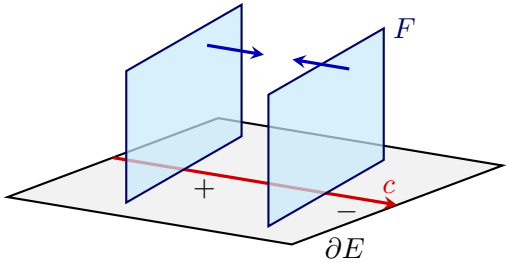

\end{proof}
\begin{proposition}\label{prop:non-separating}
If $F$ is orientable and non-separating in $E(K)$, then the boundary slope of $F$ is zero; that is, $p = 0$ and $q = 1$.
\end{proposition}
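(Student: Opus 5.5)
The plan is to use a closed loop that meets $F$ algebraically once, and then read off the slope by a homological computation. Since $F$ is orientable and non-separating in $E(K)$, there is a simple closed curve $\gamma$ in $E(K)$ that meets $F$ transversely in exactly one point. To build it, take a short arc crossing $F$ once at an interior point. Its two endpoints lie on opposite sides of $F$. Because $F$ does not separate, $E(K)$ cut along $F$ is connected, so we can join the endpoints by an arc in the complement of $F$. The algebraic intersection number of $\gamma$ with $F$ is then $\pm 1$.

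Next, compute in homology. The surface $F$ represents a class $[F,\partial F] \in H_2(E(K),\partial E(K))$, and intersection with it defines a homomorphism $\phi : H_1(E(K)) \to \mathbb{Z}$. By Alexander duality, $H_1(E(K)) \cong \mathbb{Z}$ is generated by the meridian $m$. Since $\phi([\gamma]) = \pm 1$, the map $\phi$ is nonzero. Therefore $\phi(m) = \pm k \neq 0$ for some nonzero integer $k$. It follows that $F$ is homologically nontrivial and is, up to sign, $k$ times a generator of $H_2(E(K),\partial E(K)) \cong H^1(E(K)) \cong \mathbb{Z}$.

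Now pass to the boundary. Orient the components of $\partial F$ compatibly with the orientation of $F$. Then $\partial [F] \in H_1(\partial E(K))$ equals $\sum_i \epsilon_i [\alpha]$ with $\epsilon_i = \pm 1$, where $\alpha$ is the slope $p[m]+q[l]$. The generator of $H_2(E(K),\partial E(K))$ is represented by a Seifert surface, and its boundary is $[l]$. Hence $\partial[F] = \pm k[l] \neq 0$, so we get
\[
\Bigl(\sum_i \epsilon_i\Bigr)(p[m]+q[l]) = \pm k [l].
\]
Since $k \neq 0$, the coefficient $\sum_i \epsilon_i$ is nonzero. Comparing coefficients of $[m]$ then gives $p = 0$. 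Because $p/q$ is a slope of an essential simple closed curve, $\gcd(p,q)=1$, and with $q \ge 1$ this forces $q = 1$.

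The main obstacle is the justification that $\partial[F] \neq 0$, which is the only nontrivial step. I would argue it through the long exact sequence of the pair $(E(K),\partial E(K))$. The group $H_2(E(K)) = 0$, since the exterior of a knot in $S^3$ has the homology of a circle. Hence the boundary map $H_2(E(K),\partial E(K)) \to H_1(\partial E(K))$ is injective, and it sends the generator to $[l]$. The nonzero class $[F]$ therefore has nonzero boundary, lying in the image spanned by $[l]$.

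A more elementary alternative avoids the exact sequence. The algebraic intersection of $m$ with $\partial F$ on the torus equals $\phi(m) \neq 0$. Computing that same intersection directly gives $\pm\bigl(\sum_i\epsilon_i\bigr) q$ — Figure~\ref{fig:positive-orientation-c}'s sign convention applied to the curve $c = m$ — so it cannot vanish. Separately, the intersection of $l$ with $\partial F$ on the torus equals $\pm\bigl(\sum_i\epsilon_i\bigr) p$. This equals $\phi(l)$, which is $0$ because $l$ is null-homologous in $E(K)$. Since $\sum_i\epsilon_i \neq 0$ by the first computation, this gives $p = 0$ directly, and hence $q = 1$ as before.
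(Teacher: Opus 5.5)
Your proposal is correct and follows essentially the same route as the paper: a loop meeting $F$ once forces $[F,\partial F]$ to be a nonzero multiple of the Seifert surface class in $H_2(E(K),\partial E(K))\cong\mathbb{Z}$, and the boundary map then sends it to a multiple of the longitude. The differences are minor. The paper uses primitivity to get $[F,\partial F]=\pm[S_0,\partial S_0]$, whereas you need only $k\neq 0$. You also make explicit that the net count $\sum_i\epsilon_i$ of oriented boundary components is nonzero, a step the paper leaves implicit when it reads off the slope from $[\partial F]=\pm[\lambda]$.
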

\begin{proof}
Observe that $F$ must have a non-empty boundary. Otherwise, $F$ would be separating in $E(K)$.

Fix an orientation of $F$, which determines a homology class $w = [F, \partial F] \in H_1(E(K), \partial E(K)) \cong \mathbb{Z}$.
Because $F$ is non-separating, there exists a curve $c$ in $E(K)$ that intersects $F$ only once. We can find $c$ by taking a curve connecting opposite sides of $F$ and sliding its ends near $F$ to coincide in $F$, forming the loop $c$.
After fixing an orientation on $c$, it represents an element $\alpha = [c]$ in  $H_1(E(K))\cong \mathbb{Z}$.
Take the cap product $H^1(E(K)) \times H_1(E(K)) \to H^0(E(K)) \cong \mathbb{Z}$.
By Poincar\'{e} duality we identify $H_2(E(K), \partial E(K))$ with $H^1(E(K))$, and hence obtain the intersection pairing. This defines a bilinear form $\beta: H_2(E(K), \partial E(K)) \times H_1(E(K)) \to \mathbb{Z}$ that takes any oriented surface $(S, \partial S)$ in $(E(K), \partial E(K))$ and an oriented loop $\gamma$ in $E(K)$ and assign $\beta([S,\partial S], [\gamma])$ as the sum of intersections with signs between $S$ and $\gamma$.
This is described via transversality.

As $\beta([F, \partial F], [c]) = 1$, both $[F, \partial F]$ and $[c]$ must be primitive in their respective homology group.
As $H_2(E(K), \partial E(K)) \cong  \mathbb{Z}$ is generated by a Seifert surface $[S_0, \partial S_0]$, it must be that $[F, \partial F] = \pm  [S_0, \partial S_0]$ in $H_2(E(K), \partial E(K))$.
Now consider the connecting homomorphism $\delta: H_2(E(K), \partial E(K)) \to H_1(\partial E(K))$.
The group $H_1(\partial E(K))$  is generated by a meridian $\mu$ (a closed curve that bounds a disk $D$ in $S^3$ and intersects $K$ in a single point) and a longitude $\lambda$ ( $= \partial S_0$).
Now $[\partial F] = \delta([F, \partial F]) = \pm \delta([S_0, \partial S_0])= \pm [ \partial S_0] = \pm [\lambda] = 0 [\mu]+ (\pm1)[\lambda]$.
This implies that the slope of $\partial F$ is zero in $\partial E(K)$.
\end{proof}


We restrict to even values of $b$ in Theorem \ref{main} because the parity of $b$ affects the orientability and separability properties of the constructed surfaces, as shown in Propositions \ref{prop:odd} and \ref{prop:non-separating}.
These constraints ensure the consistency of boundary slope realizability.

\section{Proof}
Theorem \ref{main} will result as a consequence of Theorems \ref{thm:2even-boundaries} and \ref{thm:2odd-boundaries-simplier}.
This is proved using the techniques from \cite{HO}. A brief introduction to these techniques can be found in \cite{AR}.
On all the proofs presented here, we will give a system of paths (one per tangle) that travels on the edges of Diagram $\mathcal{D}$ depicted on Figure \ref{fig:diagram_D} (see \cite{HO} for the definition).
\begin{figure}
    \centering
    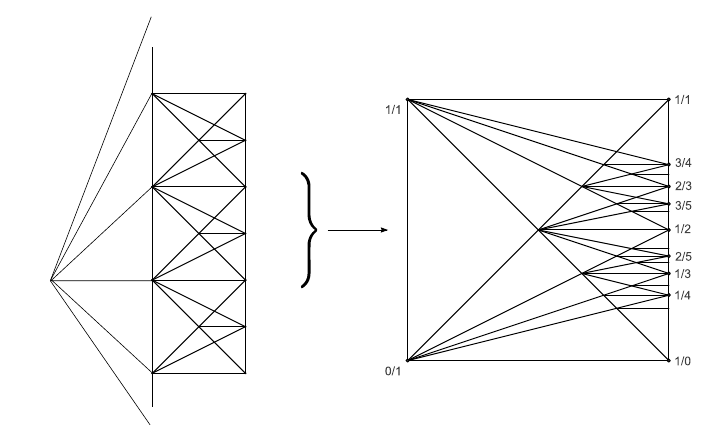
    \caption{Diagram $\mathcal{D}$}
    \label{fig:diagram_D}
\end{figure}

The conditions for a system of paths to describe a surface are given in the following definition from \cite{HO}.
\begin{definition}
An \emph{edgepath system} for a Montesinos knot $K$ is a set of $n$ edgepaths $\gamma_{1}$, $\gamma_2$, \dots $\gamma_n$ on the 1-skeleton of $\mathcal{D}$ satisfying the following conditions:

\begin{itemize}\label{def:condiciones-de-sistema-de-caminos}
\item[(E1)] The starting point of $\gamma_i$ lies on the edge $\langle
p_i/q_i ,  p_i/q_i \rangle$, and if the starting point is not the vertex
$\langle p_i/q_i \rangle$, then the edgepath $\gamma_i$ is constant.
\item[(E2)] $\gamma_i$ is \emph{minimal}, i.e., it never stops or retraces
itself, nor does it ever go along two sides of the same triangle of
$\mathcal{D}$ in succession.
\item[(E3)] The ending points of the $\gamma_i$'s are rational points of
$\mathcal{D}$ which all lie on one vertical line and whose vertical coordinates
add up to zero.
\item[(E4)] $\gamma_i$ proceeds monotonically from right to left,
``monotonically'' in the weak sense that motion along vertical edges is
permitted.
\end{itemize}
\end{definition}

 \begin{theorem}\label{thm:surface_on_-ppq}
 The pretzel knot $K = P(-p, p, q)$, with $p \geq 3$ and $q \geq  a^2 - a +1$ odd integers, has an incompressible surface with $gcd(a+b, 2a^2)/gcd(a,b)$ boundary components and a slope denominator $(a+b)/gcd(a+b, 2a^2)$ where $a=p-1$, $b=q-1$.
 
 \emph{Note}: here $gcd(\cdot,\cdot)$ is the greatest common divisor function.
 \end{theorem}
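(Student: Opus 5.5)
We realize the surface by an explicit candidate edgepath system in the Hatcher--Oertel diagram $\mathcal{D}$ for the three tangles $-1/p$, $1/p$, $1/q$. We then read off the number of sheets, the twist, and the boundary data from the formulas in \cite{HO}.

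\textbf{The edgepath system.} Recall that the vertex $\langle r/s\rangle$ has coordinates $(u,v)=((s-1)/s,\,r/s)$. On the edge $\langle 1/k\rangle$--$\langle 0\rangle$ a point at height $u$ has $v=u/(k-1)$, and similar linear formulas hold on the edges $\langle \pm 1/k\rangle$--$\langle \pm 1/(k-1)\rangle$. I would take the following paths:
\begin{itemize}
\item $\gamma_3$ runs from $\langle 1/q\rangle$ along the edge toward $\langle 0\rangle$;
\item $\gamma_2$ runs from $\langle 1/p\rangle$ toward $\langle 0\rangle$;
\item $\gamma_1$ runs from $\langle -1/p\rangle$ through $\langle -1/(p-1)\rangle$ and beyond toward $\langle 0\rangle$ on the negative side.
\end{itemize}
All three stop on a common vertical line $u=u_0$. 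Condition (E3) then becomes a single linear equation in $u_0$ whose coefficients involve $1/a$ and $1/b$, with $a=p-1$, $b=q-1$. Solving it should produce a rational $u_0$, and hence endpoint coordinates, whose denominators are built from $a+b$ and $2a^2$. The expected gcd expressions then appear as the reduced numerators and denominators. Conditions (E1), (E2) and (E4) are immediate, since each path is a monotone run along consecutive edges of $\mathcal{D}$. If this particular choice fails to satisfy the sign condition, I would instead fix $\gamma_2$ on a vertical edge and let only $\gamma_1$ and $\gamma_3$ travel. Either way the computation is the same linear solve.

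\textbf{Reading off the invariants.} The surface has $m$ sheets, where $m$ is the least common multiple of the denominators of the endpoint weights, that is, of the fractional parts of the coordinates on the line $u=u_0$. Its twist is $\tau=2(e_- - e_+)$, counted over edges traversed. Subtracting the twist of the Seifert surface of $P(-p,p,q)$, which is itself given by an edgepath system in \cite{HO}, gives the slope $p'/q'$. I expect $q'=(a+b)/\gcd(a+b,2a^2)$.

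The number of boundary components is $\gcd$ of the number of sheets and the numerator of the slope times the sheet count. More precisely, $\partial F$ meets a meridian in $m\,q'$ points in total, and each component meets it $q'$ times, so $b=m/q'\cdot(\text{correction})$. Carrying this through with the explicit $m$ should yield $\gcd(a+b,2a^2)/\gcd(a,b)$. The factor $\gcd(a,b)$ arises from cancelling common factors when the three endpoint fractions are put over a common denominator.

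\textbf{Incompressibility.} This step is the main obstacle. By \cite{HO}, a candidate system with endpoints at $u_0<1$ is incompressible unless it is one of the explicitly listed exceptional configurations. These are the cases where some cycle of final edges has compensating $r$-values, or where the endpoint line is too close to $u=1$, so that the relevant edges degenerate to constant or vertical ones. I would compute the $r$-values of the last edges of the $\gamma_i$ and verify that their sum is nonzero, or has the sign excluded by the compression criterion.

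This is exactly where the hypothesis $q\ge a^2-a+1$, i.e.\ $b\ge a^2-a$, should enter. It forces $u_0$ to lie on the intended edges: in particular $\gamma_1$ genuinely passes $\langle -1/(p-1)\rangle$, and $\gamma_3$ has not yet reached the region where the system becomes compressible. So the main work is a careful inequality check that $u_0$ lies between the appropriate vertex heights, together with an application of the \cite{HO} incompressibility criterion to the resulting final edges.
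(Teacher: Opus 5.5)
Your framework matches the paper's: an explicit Hatcher--Oertel edgepath system, condition (E3), the minimal number of sheets, the twist mod $1$, $|\partial F|$ equal to sheets divided by slope denominator, and Corollary 2.5 of Hatcher--Oertel. But the proposal never produces the data the statement is about, and its one concrete structural claim is backwards. Every edge $\langle -1/k,-1/(k-1)\rangle$ lies on the line $v=u-1$, so your equation is $u_0-1+u_0/a+u_0/b=0$, giving $u_0=ab/(ab+a+b)$. The condition $u_0\ge (a-1)/a$, the $u$-coordinate of $\langle -1/(p-1)\rangle$, is equivalent to $b\ge a^2-a$. So the hypothesis $q\ge a^2-a+1$ is exactly what keeps the endpoint of your $\gamma_1$ (the paper's $\gamma_0$) on the first edge $\langle -1/p,-1/(p-1)\rangle$. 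A path that strictly passes $\langle -1/(p-1)\rangle$ admits no solution of (E3) under the hypothesis. In the paper this is the condition $x_0=(a+b-a^2)/\gcd(a,b)\ge 0$, and it plays no role in incompressibility.

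The missing step is converting $u_0$ into fractions of sheets, and this is not affine interpolation in the $uv$-plane. A point with $m$ sheets, $k$ at $\langle r/s\rangle$ and $m-k$ at $\langle r'/s'\rangle$, is the projective class of $k(s,s-1,r)+(m-k)(s',s'-1,r')$. Writing this vector as $(A,B,C)$, one has $A=B+m$, so $u=B/(B+m)$. With $m$ common to all three tangles, (E3) says the $B$'s agree and the $C$'s sum to zero, which is exactly the paper's linear system. Its solution gives final-edge fractions $a^2/(a+b)$, $a/(a+b)$, $b/(a+b)$. Hence $m=(a+b)/\gcd(a,b)$, and the twist is $\equiv 2a^2/(a+b)\pmod 1$. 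Since the Seifert twist is an integer, the slope denominator is $q'=(a+b)/\gcd(a+b,2a^2)$. Then $|\partial F|=m/q'$ exactly, with no ``correction'', because each boundary curve of slope $p'/q'$ meets a meridian $q'$ times.

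If you instead read the weights off the $uv$-picture by affine interpolation along the edges, you get $a^3/N$, $a/N$, $b/N$ with $N=ab+a+b$, and the wrong invariants. The expressions $a+b$ and $2a^2$ do not appear in $u_0$ itself: $a+b$ enters only through this conversion, and $2a^2$ only through the twist.

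Finally, the Hatcher--Oertel criterion is not about the sum or sign of the $r$-values. You must compute the cycle, here $(1,p-1,q-1)$, and check that it is none of $(0,r_2,\dots)$, $(1,\dots,1,r_n)$, $(1,\dots,1,2,r_n)$. For $p=3$ the cycle is $(1,2,q-1)$, which literally has the third form, so that case needs more than the corollary as quoted.
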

  
\begin{proof}
We take the path systems given by 
\begin{align*}
    \gamma_0&:& \langle \frac{-1}{p}\rangle &\longrightarrow \frac{x_0}{x_0+y_0} \langle \frac{-1}{p}\rangle + \frac{y_0}{x_0+y_0}\langle \frac{-1}{p-1}\rangle \\
    \gamma_1&:& \langle \frac{1}{p}\rangle &\longrightarrow \frac{x_1}{x_1+y_1} \langle \frac{1}{p}\rangle + \frac{y_1}{x_1+y_1} \langle \frac{0}{1}\rangle \\
    \gamma_2&: &\langle \frac{1}{q}\rangle &\longrightarrow \frac{x_2}{x_2+y_2} \langle \frac{1}{q}\rangle + \frac{y_2}{x_2+y_2}  \langle \frac{0}{1}\rangle \\
\end{align*}

We write ending points of the edgepaths in $(a,b,c)$ coordinates, 

\begin{align*}
     \text{End}(\gamma_0)&:& (x_0+y_0, x_0(p-1)+y_0(p-2), -x_0-y_0) \\
    \text{End}(\gamma_1)&:& (x_1+y_1, x_1(p-1), x_1) \\
    \text{End}(\gamma_2)&:& (x_2+y_2, x_2(q-1), x_2) \\
\end{align*}

Since each $a$-coordinate equals the number of sheets, they must be all 
equal.
By condition E3, all $b$ coordinates need to be equal, and the sum of all $c$ coordinates must be zero.
These observations translate into the following equations:
\begin{align*}
    x_0 + y_0 &= x_1 + y_1 = x_2+ y_2\\
    x_0(p-1) + y_0(p-2) &= x_1(p-1) =x_2(q-1)\\
    x_1+x_2 &= x_0+y_0
\end{align*}

By elementary number-theoretic arguments, one verifies that all solutions are multiples of the solution:
      \begin{align*}
          x_0= \frac{a+b-a^2}{g} &, &y_0 = \frac{a^2}{g} &, &x_1 =\frac{b}{g} &, &y_1 = \frac{a}{g} &, & x_2 =\frac{a}{g} &, &y_2 = \frac{b}{g}\\
      \end{align*}

where $g 
= gcd(a,b)$ (recall that $a =p-1$ and $b=q-1$). Here, we
need the condition $q \geq a^2 -a + 1$ to make $x_0$ non-negative.
So, the three edgepaths have a fractional longitude given by:
\begin{align*}
|\gamma_0| = \frac{y_0}{x_0+y_0} = \frac{a^2}{a+b} &, & |\gamma_1| = \frac{a}{a+b}&,& |\gamma_2|
= \frac{b}{a+b}
\end{align*}

Then the number of sheets has to be an integer $s$ such that $s|\gamma_i|$ is an integer for $i=0, 1,2$.
This implies that $s$ is a multiple of $(a+b)/gcd(a,b)$. This means that to construct a connected surface, it is enough to consider when the \emph{number of sheets} is  $s = (a+b)/gcd(a,b)$.
Now, the twist $\tau(\gamma_i)$, in this case, is twice the length of each edgepath with a sign (negative if it goes downwards and positive if it goes upwards in $\mathcal{D}$):

\begin{align*}
\tau(\gamma_0) =  \frac{2a^2}{a+b} &, & \tau(\gamma_1) = -\frac{2a}{a+b}&,& \tau(\gamma_2) = -\frac{2b}{a+b}
\end{align*}

The total twist is: $$\tau(\gamma_0, \gamma_1, \gamma_2) = \frac{2a^2-2a -2b}{a+b}$$

And by  \cite{HO} we know that the actual slope of the surface $slope(F)$ satisfy that $\displaystyle slope(F) \equiv \tau(\gamma_0, \gamma_1, \gamma_2) \equiv \frac{2a^2}{a+b}\pmod{1}$.
This means that the denominator of $slope(F)$, as an irreducible fraction,  is equal to $(a+b)/gcd(a+b, 2a^2)$.
Finally, the number of boundary components must be: $$|\partial F|=sheets(F)/den(\partial F) = \frac{(a+b)/gcd(a,b)}{(a+b)/gcd(a+b, 2a^2)} = \frac{gcd(a+b, 2a^2)}{gcd(a,b)}.$$

This determines the slope of the candidate.
It remains to be proven that it is incompressible and orientable.
For incompressibility, we use {\cite[Corollary 2.5]{HO}}, which we state below.
\begin{corollary}[Corollary 2.5 from \cite{HO}] \label{cor:HO-2.5}
A candidate surface is incompressible unless the cycle of $r$-values for the final edges of the $\gamma_i$'s is of one the following types: $(0, r_2, \dots, r_n)$, $ (1, \dots , 1, r_n)$, or $(1, \dots , 1, 2, r_n)$.
\end{corollary}

The mentioned cycle of $r$-values can be computed from the last edge of each edgepath $\gamma_i$ as follows.
If $\langle p/q, u/v\rangle$ is the final edge of $\gamma_i$ (going from $p/q$ towards $u/v$),  $r_i =  q - v$ is the $r$-value corresponding to $\gamma_i$.
Then,  $(r_1, r_2, \dots, r_n) $ is the cycle of $r$-values for the final edges.
In our case, the final edges (the only edge per edgepath) are $\langle1/p, 1/p-1\rangle$, $\langle 1/p, 0/1 \rangle$ and $\langle 1/q, 0/1 \rangle$.
So the cycle of $r$-values is  $(1, p-1, q-1)$, which is none of the exceptional cases from Corollary \ref{cor:HO-2.5}.
So, our candidate surface is incompressible.

The orientability of the candidate surface is discussed in Theorem \ref{thm:orientability_of_surfeces_on_-ppq}.
\end{proof}

As for orientability, the surfaces from Theorem \ref{thm:surface_on_-ppq} are orientable only in the following cases:

\begin{theorem}\label{thm:orientability_of_surfeces_on_-ppq}
The surface given by Theorem \ref{thm:surface_on_-ppq} is orientable if $g=\gcd(a,b)$ is even and $(a+b)/g$ is even.
\end{theorem}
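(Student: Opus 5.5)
The plan is to use the Hatcher--Oertel orientability criterion for candidate surfaces of Montesinos knots from \cite{HO} (see also \cite{AR}). Let $s$ be the number of sheets. Each arc of the edgepath $\gamma_i$ corresponds to saddles inside the $i$-th tangle. The candidate surface is orientable if and only if we can orient its pieces so that the sheets meeting along the boundary spheres of the tangles match up. Concretely, the sheets cross the dividing circles in arcs, and these arcs can be oriented alternately, as in the $\pm$ pattern of the proof of Proposition \ref{prop:odd}. The alternating pattern is consistent around a circle exactly when the relevant count is even.

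First, I will read off the local structure of each tangle from the solution in Theorem \ref{thm:surface_on_-ppq}, with $s=(a+b)/g$. In tangle $i$, $x_i$ of the sheets are \emph{unsaddled}: they keep the slope $\pm1/p$ or $1/q$ arcs. The other $y_i$ sheets are \emph{saddled}: they move to the endpoint $\langle -1/(p-1)\rangle$ or $\langle 0/1\rangle$, and in the $(a,b,c)$-coordinates they contribute the remaining intersection count. Along a straight edge from $\langle 1/q\rangle$ to $\langle 0/1\rangle$, each saddle joins two adjacent arcs of slope $1/q$ into a pair of arcs of slope $0$. This is compatible with an alternating orientation exactly when the number of arcs of each type crossing a given circle is even. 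So the second step is to impose that, on each of the four punctured spheres bounding the tangles, every arc family has an even count.

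Third, I will translate these parity conditions into conditions on $s$ and on the coordinates $x_i,y_i$. For $\gamma_1$ and $\gamma_2$ these coordinates are $b/g$, $a/g$ and $a/g$, $b/g$. Since $g=\gcd(a,b)$, at least one of $a/g$ and $b/g$ is odd. The saddles must therefore pair up across the circle, which forces the total number $s=(a+b)/g$ to be even. For $\gamma_0$ the pieces are $x_0=(a+b-a^2)/g$ and $y_0=a^2/g$. Since $a=p-1$ and $b=q-1$ are even, $a^2/g$ is even once $g$ is even. So the condition that $g$ be even controls the parity contributions from $\gamma_0$ and from the $c$-coordinates, which are the $\pm x_i$ and sum to zero. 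Combining the three tangles gives the two stated conditions: $g$ even and $(a+b)/g$ even.

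The main obstacle is the second step: stating the local orientability rule of \cite{HO} correctly for a straight edgepath that ends partway along an edge, and tracking how orientations glue across the Conway sphere between consecutive tangles. I would first try to check it by hand on the picture of $s$ parallel arcs with saddles, verifying that the alternating orientation extends through every saddle. The parity bookkeeping after that should be routine.
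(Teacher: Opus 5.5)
\paragraph{There is a genuine gap in your second and third steps.} You never produce an orientation or check that one glues. The parity rule you put in place of that check is not the right one.

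\emph{The rule fails under the hypotheses.} Here $a/g$ and $b/g$ are coprime with even sum $(a+b)/g$, so both are \emph{odd}. In the balls of $\gamma_1$ and $\gamma_2$, the unsaddled and saddled families of sheets have sizes $b/g,a/g$ and $a/g,b/g$, so all of them are odd. Your requirement that ``every arc family has an even count'' therefore fails exactly where the theorem asserts orientability. If instead you count arcs, two per sheet, the requirement is vacuous and yields nothing.

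\emph{Step three runs in the wrong direction.} ``The saddles must pair up, which forces $s$ to be even'' is a necessary condition. The statement is a sufficiency claim.

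\emph{The hypothesis that $g$ be even does no work.} Since $a$ and $b$ are both even, $g$ is automatically even. Also $a^2/g=a\cdot(a/g)$ is even whatever the parity of $g$.

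\paragraph{What the paper checks.}
\begin{enumerate}
\item The $2s$ disks of $S_i'$ are oriented by sign vectors $\mathbf{x},\mathbf{y}\in\{\pm1\}^s$ on the parallel copies $\alpha_j,\beta_j$.
\item A saddle respects orientation only if it joins opposite sides of $\alpha_j$ and $\beta_j$. This forces $\mathbf{x}=\mathbf{y}$, and both are taken alternating.
\item After the saddles, the induced pattern on $\partial B_1$ consists of alternating strings of lengths $a/g$ and $b/g$. Because these lengths are odd, each string begins and ends with the same sign.
\item Reversing all signs in $B_2$ makes the patterns agree on the shared half-disk $D_1^R=D_2^L$. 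Adjacent tangle balls are wedges around the $z$-axis meeting in a disk, not along a whole Conway sphere.
\item $\partial B_0$ is handled the same way.
\end{enumerate}
So the essential input is the oddness of $a/g$ and $b/g$, the opposite of your evenness rule.

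\paragraph{A shorter route.} If you want an argument in the spirit of Propositions~\ref{prop:odd} and~\ref{prop:non-separating}, first note that
$|\partial F|=\gcd(a+b,2a^2)/g=\gcd\bigl((a+b)/g,\,2a\cdot(a/g)\bigr)$,
which is even once $(a+b)/g$ is even. Now suppose $F$ is a connected non-orientable surface in $E(K)$.
\begin{enumerate}
\item It is one-sided, so it meets some loop in exactly one point.
\item Hence it represents the generator of $H_2(E(K),\partial E(K);\mathbb{Z}/2)\cong\mathbb{Z}/2$, and its boundary is $\lambda$ mod $2$.
\item Each boundary component has slope $p/q$, so $|\partial F|\,q\equiv 1 \pmod 2$ and $|\partial F|$ is odd.
\end{enumerate}
This contradicts the evenness of $|\partial F|$, so $F$ is orientable.
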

\begin{proof}

Candidate surfaces are constructed by gluing embedded surfaces $S_i$ inside each tangle $p_i/q_i$;
we denote by $B_i$ the ball containing the tangle.
Here, $p_0/q_0 = -1/p$, $p_1/q_1 = 1/p$, and $p_2/q_2 = 1/q$. The edgepath $\gamma_i$ encodes the surfaces $S_i \subset B_i$, which are generally disconnected.
In our case, the edgepaths $\gamma_0$, $\gamma_1$, and $\gamma_2$ are described in the proof of Theorem~\ref{thm:surface_on_-ppq}.

One can check that the surfaces $S_0$, $S_1$, and $S_2$ are all disconnected.
Our strategy to prove orientability is to orient each component of $S_i$ so that they induce opposite orientations on $B_i \cap B_j$.
This ensures that the chosen orientations extend to the entire surface, thereby proving orientability.

We need first to recall how each $S_i$ is constructed. We start by considering a collar neighborhood $C = S^2 \times I $ of the $\partial B_i$ inside $B_i$ where $S\times \{1\} = \partial B_i$.
Put the first part of the arcs of the tangle parallel to $I$ following the product structure of $C$, and the tangled part of the arcs resting over the sphere $S^2 \times \{0\}$ (as in Figure \ref{fig:parallel_copies}).
Now take $s$ parallel copies of slope $p_i/q_i$ over $S^2 \times \{0\}$;
over the neighborhood of the $p_i/q_i$ arcs part of the Montesinos's decomposition of the original knot $K$ (see Figure \ref{fig:parallel_copies}).
Product these parallel copies with the $[0, \epsilon]$ inside $C$, creating in this way a surface $S'_i$ with $2s$ disk components contained in $B_i$.
\begin{figure}
        \centering
        \includegraphics[width=0.5\linewidth]{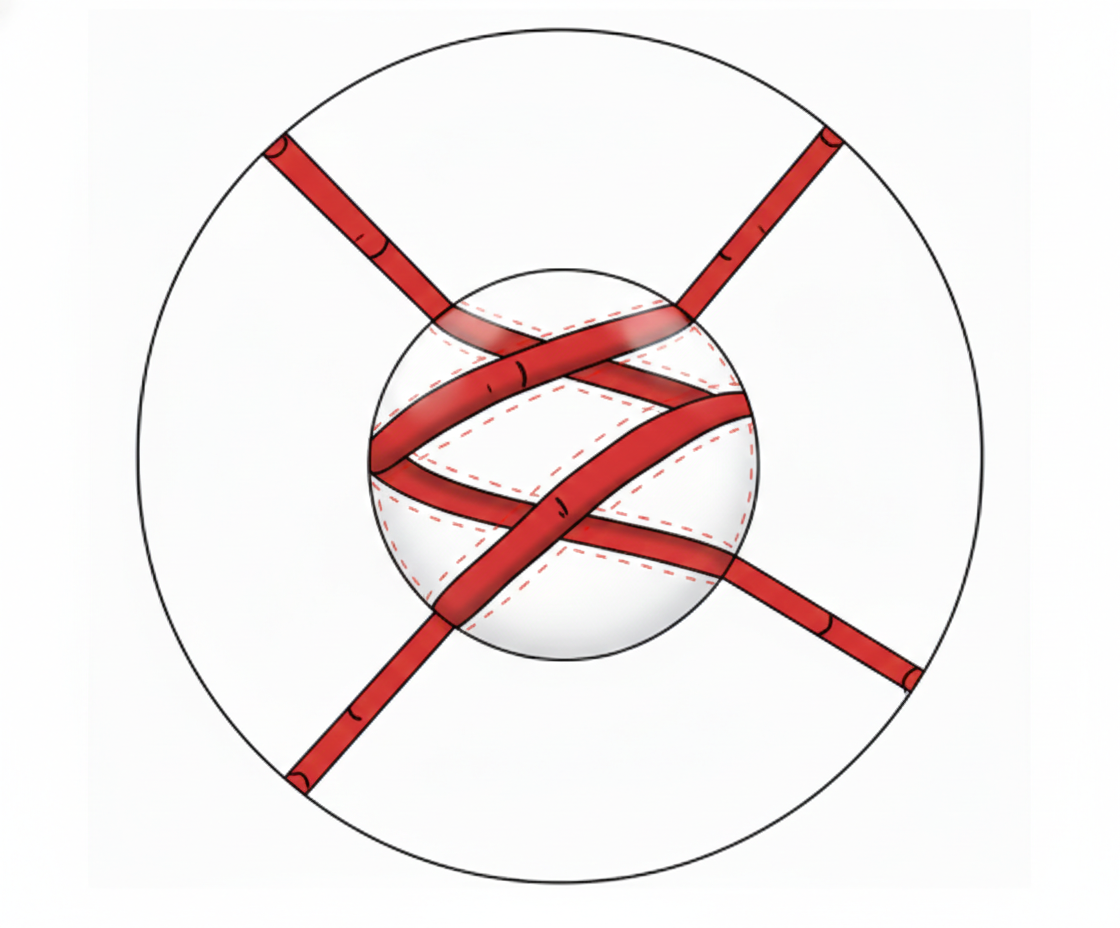}
        \caption{Parallel copies of the slope $1/2$ over the regular neighborhood of the tangle $1/2$.}
        \label{fig:parallel_copies}
    \end{figure}


To construct $S_i$ from $S'_i$, we attach saddle bands according to the edgepath $\gamma_i$ and transverse to the product structure of the collar $C = S^2 \times I$, starting from level $S^2 \times \epsilon$ and moving upwards toward $S^2 \times 1  = \partial B_i$.

For the case that concern us,  the edgepath $\gamma_i$ prescribe a total of $y_i$ saddle of the same type for   $i=0,1,2$ . For example, for $\gamma_1$, we need to attach $y_1 = \tfrac{a}{g}$ saddles, each transforming a pair of arcs of slope $1/p$ into a pair of arcs of slope $0$. So, we start at level $S^2 \times \epsilon$ with $s = a/g + b/g $ parallel copies of a pair of arcs of slope $1/p$ and we end up at level $S^2 \times 1$ with $a/g$ of slope $1/p$ plus $b/g$ of slope $0$ . This is achieved using the saddle move illustrated in Figure~\ref{fig:saddle10}.

Now, to fix an orientation for $S_i$ we will first fix one for $S'_i$; which has $2^{2s}$ possibilities (two per component of $S'_i$). We can picture these orientations by their induced orientation in $\partial S'_i$ on $S^2 \times \epsilon$; which are $s$-parallel copies of directed pairs of arcs of slope $p_i/q_i$. But not every orientation can be extended to an orientation of $S_i$. We need to make sure that the saddle preserve that orientation.

For a pair of arcs $\alpha$ and $\beta$ of slope $p_i/q_i$, the two arcs must be oriented so that the saddle being use connects opposite sides of $\alpha$ and $\beta$. For the specific case of saddle changing the slope from $1/p$ to $0$, the saddle preserve orientation if the arcs $\alpha$ and $\beta$ are directed as shown in Figure~\ref{fig:saddle10}, or by reversing both simultaneously. Fix any of thoses orientation on $\alpha$  and $\beta$.

\begin{figure}
    \centering
\begingroup%
  \makeatletter%
  \providecommand\color[2][]{%
    \errmessage{(Inkscape) Color is used for the text in Inkscape, but the package 'color.sty' is not loaded}%
    \renewcommand\color[2][]{}%
  }%
  \providecommand\transparent[1]{%
    \errmessage{(Inkscape) Transparency is used (non-zero) for the text in Inkscape, but the package 'transparent.sty' is not loaded}%
    \renewcommand\transparent[1]{}%
  }%
  \providecommand\rotatebox[2]{#2}%
  \newcommand*\fsize{\dimexpr\f@size pt\relax}%
  \newcommand*\lineheight[1]{\fontsize{\fsize}{#1\fsize}\selectfont}%
  \ifx\svgwidth\undefined%
    \setlength{\unitlength}{187.50000721bp}%
    \ifx\svgscale\undefined%
      \relax%
    \else%
      \setlength{\unitlength}{\unitlength * \real{\svgscale}}%
    \fi%
  \else%
    \setlength{\unitlength}{\svgwidth}%
  \fi%
  \global\let\svgwidth\undefined%
  \global\let\svgscale\undefined%
  \makeatother%
  \begin{picture}(1,0.86264772)%
    \lineheight{1}%
    \setlength\tabcolsep{0pt}%
    \put(0,0){\includegraphics[width=\unitlength,page=1]{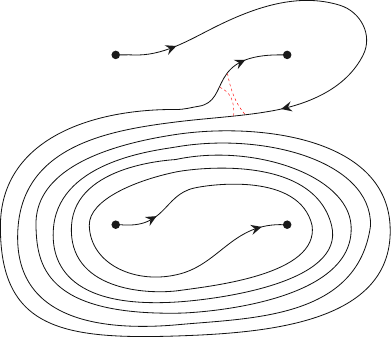}}%
    \put(0.34328438,0.75413099){\makebox(0,0)[lt]{\lineheight{1.25}\smash{\begin{tabular}[t]{l}$\alpha$\end{tabular}}}}%
    \put(0.3004215,0.59641369){\makebox(0,0)[lt]{\lineheight{1.25}\smash{\begin{tabular}[t]{l}$\beta$\end{tabular}}}}%
  \end{picture}%
\endgroup%

    \caption{Example of a saddle move from slope $1/7$ to slope $0$.}
    \label{fig:saddle10}
\end{figure}


Next, consider the $s$ parallel copies $\alpha_1,\ldots,\alpha_s$ of $\alpha$ and $\beta_1,\ldots,\beta_s$ of $\beta$ in $\partial S'_i \subset S \times {\epsilon}$, numbered from leftmost to rightmost (see Figure~\ref{fig:parallelcopies}). Each arc $\alpha_j$ may be oriented either consistently with $\alpha$ or with the opposite orientation. We encode this choice by a vector

\[
\mathbf{x}=(x_1,\ldots,x_s)\in\{\pm1\}^s,
\]

and similarly we encode the orientations of the arcs $\beta_i$ by

\[
\mathbf{y}=(y_1,\ldots,y_s)\in\{\pm1\}^s.
\]
\begin{figure}
    \centering
\begingroup%
  \makeatletter%
  \providecommand\color[2][]{%
    \errmessage{(Inkscape) Color is used for the text in Inkscape, but the package 'color.sty' is not loaded}%
    \renewcommand\color[2][]{}%
  }%
  \providecommand\transparent[1]{%
    \errmessage{(Inkscape) Transparency is used (non-zero) for the text in Inkscape, but the package 'transparent.sty' is not loaded}%
    \renewcommand\transparent[1]{}%
  }%
  \providecommand\rotatebox[2]{#2}%
  \newcommand*\fsize{\dimexpr\f@size pt\relax}%
  \newcommand*\lineheight[1]{\fontsize{\fsize}{#1\fsize}\selectfont}%
  \ifx\svgwidth\undefined%
    \setlength{\unitlength}{179.91075771bp}%
    \ifx\svgscale\undefined%
      \relax%
    \else%
      \setlength{\unitlength}{\unitlength * \real{\svgscale}}%
    \fi%
  \else%
    \setlength{\unitlength}{\svgwidth}%
  \fi%
  \global\let\svgwidth\undefined%
  \global\let\svgscale\undefined%
  \makeatother%
  \begin{picture}(1,0.22477119)%
    \lineheight{1}%
    \setlength\tabcolsep{0pt}%
    \put(0,0){\includegraphics[width=\unitlength,page=1]{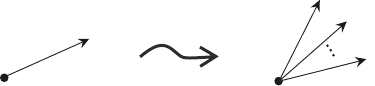}}%
    \put(0.12780067,0.10142112){\makebox(0,0)[lt]{\lineheight{1.25}\smash{\begin{tabular}[t]{l}$\alpha$\end{tabular}}}}%
    \put(0.75145946,0.18153548){\makebox(0,0)[lt]{\lineheight{1.25}\smash{\begin{tabular}[t]{l}$\alpha_1$\end{tabular}}}}%
    \put(0.8247738,0.13769795){\makebox(0,0)[lt]{\lineheight{1.25}\smash{\begin{tabular}[t]{l}$\alpha_2$\end{tabular}}}}%
    \put(0.88401725,0.05740646){\makebox(0,0)[lt]{\lineheight{1.25}\smash{\begin{tabular}[t]{l}$\alpha_s$\end{tabular}}}}%
  \end{picture}%
\endgroup%

    \caption{Taking parallel copies of $\alpha$ and numbering from leftmost to rigthmost}
    \label{fig:parallelcopies}
\end{figure}

Orientability is preserved after attaching the saddles provided that the orientation choices on the parallel arcs $\alpha_j$ and $\beta_j$ agree both with the one of $\alpha$ and $\beta$ or its reverse, that is, provided that $\mathbf{x}=\mathbf{y}$. A convenient choice satisfying this condition is to alternate signs, for instance
$$\mathbf{x}=\mathbf{y}=(+1,-1,+1,\ldots,+1,-1).$$

After performing $\tfrac{a}{g}$ saddle moves, we obtain an induced orientation on $\partial B$, shown schematically in Figure~\ref{fig:orientationafter}. In this configuration, the orientation data are given by
\[
\mathbf{x_1}=(-1,+1,-1,+1,\ldots,-1), \qquad
\mathbf{y_1}=(+1,-1,+1,-1,\ldots,+1),
\]
where \(|\mathbf{x_1}|=\tfrac{a}{g}\) and \(|\mathbf{y_1}|=\tfrac{b}{g}\). Here \(|\cdot|\) denotes the length of the vector. 

Notice that $\mathbf{x_1}$ starts in $-1$ and ends in $-1$ because $a/g$ is odd, and the same applies for $b/g$.

\begin{figure}[h]
  \centering
\begingroup%
  \makeatletter%
  \providecommand\color[2][]{%
    \errmessage{(Inkscape) Color is used for the text in Inkscape, but the package 'color.sty' is not loaded}%
    \renewcommand\color[2][]{}%
  }%
  \providecommand\transparent[1]{%
    \errmessage{(Inkscape) Transparency is used (non-zero) for the text in Inkscape, but the package 'transparent.sty' is not loaded}%
    \renewcommand\transparent[1]{}%
  }%
  \providecommand\rotatebox[2]{#2}%
  \newcommand*\fsize{\dimexpr\f@size pt\relax}%
  \newcommand*\lineheight[1]{\fontsize{\fsize}{#1\fsize}\selectfont}%
  \ifx\svgwidth\undefined%
    \setlength{\unitlength}{187.49998558bp}%
    \ifx\svgscale\undefined%
      \relax%
    \else%
      \setlength{\unitlength}{\unitlength * \real{\svgscale}}%
    \fi%
  \else%
    \setlength{\unitlength}{\svgwidth}%
  \fi%
  \global\let\svgwidth\undefined%
  \global\let\svgscale\undefined%
  \makeatother%
  \begin{picture}(1,1.1399205)%
    \lineheight{1}%
    \setlength\tabcolsep{0pt}%
    \put(0,0){\includegraphics[width=\unitlength,page=1]{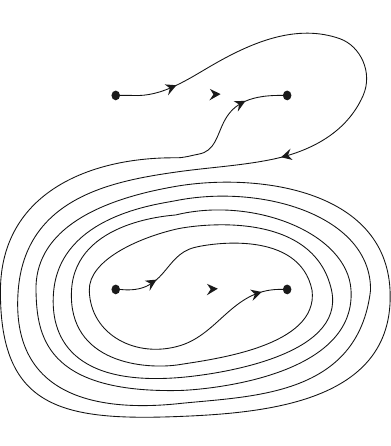}}%
    \put(0.43899282,0.96396473){\makebox(0,0)[lt]{\lineheight{1.25}\smash{\begin{tabular}[t]{l}$\mathbf{x_1}$\end{tabular}}}}%
    \put(0.37998228,0.84323087){\makebox(0,0)[lt]{\lineheight{1.25}\smash{\begin{tabular}[t]{l}$\mathbf{y_1}$\end{tabular}}}}%
    \put(0,0){\includegraphics[width=\unitlength,page=2]{orientationafter.pdf}}%
    \put(0.48242796,0.41121808){\makebox(0,0)[lt]{\lineheight{1.25}\smash{\begin{tabular}[t]{l}$\mathbf{y_1}$\end{tabular}}}}%
    \put(0.34784378,0.44679322){\makebox(0,0)[lt]{\lineheight{1.25}\smash{\begin{tabular}[t]{l}$\mathbf{x_1}$\end{tabular}}}}%
    \put(0,0){\includegraphics[width=\unitlength,page=3]{orientationafter.pdf}}%
    \put(0.5132991,1.09771598){\color[rgb]{1,0,0}\makebox(0,0)[lt]{\lineheight{1.25}\smash{\begin{tabular}[t]{l}$z$-axis\end{tabular}}}}%
  \end{picture}%
\endgroup%

  \caption{Orientation after $\frac{a}{g}$ saddle moves.}
  \label{fig:orientationafter}
\end{figure}

Cutting the diagram along the $z$--axis (indicated by the dotted red line in Figure~\ref{fig:orientationafter}) produces two disks, or equivalently two half-planes, which we denote by $D_1^L$ and $D_1^R$ (see Figure~\ref{fig:halves1}).

\begin{figure}[h]
  \centering
\begingroup%
  \makeatletter%
  \providecommand\color[2][]{%
    \errmessage{(Inkscape) Color is used for the text in Inkscape, but the package 'color.sty' is not loaded}%
    \renewcommand\color[2][]{}%
  }%
  \providecommand\transparent[1]{%
    \errmessage{(Inkscape) Transparency is used (non-zero) for the text in Inkscape, but the package 'transparent.sty' is not loaded}%
    \renewcommand\transparent[1]{}%
  }%
  \providecommand\rotatebox[2]{#2}%
  \newcommand*\fsize{\dimexpr\f@size pt\relax}%
  \newcommand*\lineheight[1]{\fontsize{\fsize}{#1\fsize}\selectfont}%
  \ifx\svgwidth\undefined%
    \setlength{\unitlength}{190.03343717bp}%
    \ifx\svgscale\undefined%
      \relax%
    \else%
      \setlength{\unitlength}{\unitlength * \real{\svgscale}}%
    \fi%
  \else%
    \setlength{\unitlength}{\svgwidth}%
  \fi%
  \global\let\svgwidth\undefined%
  \global\let\svgscale\undefined%
  \makeatother%
  \begin{picture}(1,0.90234814)%
    \lineheight{1}%
    \setlength\tabcolsep{0pt}%
    \put(0,0){\includegraphics[width=\unitlength,page=1]{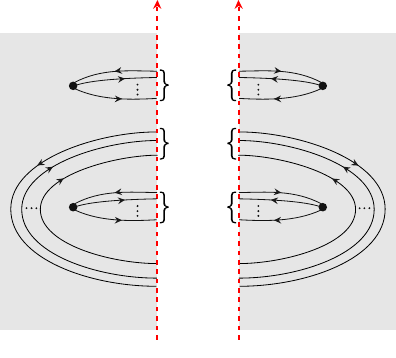}}%
    \put(0.499995,0.68489877){\color[rgb]{0,0,0}\makebox(0,0)[t]{\lineheight{1.25}\smash{\begin{tabular}[t]{c}$\frac{a+b}{g}$\end{tabular}}}}%
    \put(0.20109456,0.01352212){\color[rgb]{0,0,0}\makebox(0,0)[t]{\lineheight{1.25}\smash{\begin{tabular}[t]{c}$D^L_1$\end{tabular}}}}%
    \put(0.7972223,0.01352212){\color[rgb]{0,0,0}\makebox(0,0)[t]{\lineheight{1.25}\smash{\begin{tabular}[t]{c}$D^R_1$\end{tabular}}}}%
    \put(0.499995,0.3766118){\color[rgb]{0,0,0}\makebox(0,0)[t]{\lineheight{1.25}\smash{\begin{tabular}[t]{c}$\frac{a+b}{g}$\end{tabular}}}}%
    \put(0.499995,0.53766754){\color[rgb]{0,0,0}\makebox(0,0)[t]{\lineheight{1.25}\smash{\begin{tabular}[t]{c}$\frac{a\cdot b}{g}$\end{tabular}}}}%
  \end{picture}%
\endgroup%

  \caption{Left and right halves $D_1^L$ and $D_1^R$.}
  \label{fig:halves1}
\end{figure}

An analogous analysis applies to $\gamma_2$. To obtain compatible orientations, we reverse all signs, namely
\[
x_2=(-1,+1,-1,+1,\ldots,-1), \qquad
y_2=(+1,-1,+1,-1,\ldots,+1),
\]
yielding the configuration shown in Figure~\ref{fig:halves2}. At this stage we have constructed two oriented surfaces $F_1$ and $F_2$ which match perfectly along
\[
F_1 \cap F_2 \subset D_1^R = D_2^L = B_1 \cap B_2.
\]

\begin{figure}[h]
  \centering
\begingroup%
  \makeatletter%
  \providecommand\color[2][]{%
    \errmessage{(Inkscape) Color is used for the text in Inkscape, but the package 'color.sty' is not loaded}%
    \renewcommand\color[2][]{}%
  }%
  \providecommand\transparent[1]{%
    \errmessage{(Inkscape) Transparency is used (non-zero) for the text in Inkscape, but the package 'transparent.sty' is not loaded}%
    \renewcommand\transparent[1]{}%
  }%
  \providecommand\rotatebox[2]{#2}%
  \newcommand*\fsize{\dimexpr\f@size pt\relax}%
  \newcommand*\lineheight[1]{\fontsize{\fsize}{#1\fsize}\selectfont}%
  \ifx\svgwidth\undefined%
    \setlength{\unitlength}{190.03343717bp}%
    \ifx\svgscale\undefined%
      \relax%
    \else%
      \setlength{\unitlength}{\unitlength * \real{\svgscale}}%
    \fi%
  \else%
    \setlength{\unitlength}{\svgwidth}%
  \fi%
  \global\let\svgwidth\undefined%
  \global\let\svgscale\undefined%
  \makeatother%
  \begin{picture}(1,0.90234814)%
    \lineheight{1}%
    \setlength\tabcolsep{0pt}%
    \put(0,0){\includegraphics[width=\unitlength,page=1]{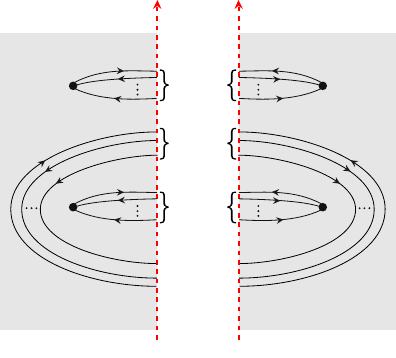}}%
    \put(0.499995,0.68489877){\color[rgb]{0,0,0}\makebox(0,0)[t]{\lineheight{1.25}\smash{\begin{tabular}[t]{c}$\frac{a+b}{g}$\end{tabular}}}}%
    \put(0.20109456,0.01352212){\color[rgb]{0,0,0}\makebox(0,0)[t]{\lineheight{1.25}\smash{\begin{tabular}[t]{c}$D^L_2$\end{tabular}}}}%
    \put(0.7972223,0.01352212){\color[rgb]{0,0,0}\makebox(0,0)[t]{\lineheight{1.25}\smash{\begin{tabular}[t]{c}$D^R_2$\end{tabular}}}}%
    \put(0.499995,0.3766118){\color[rgb]{0,0,0}\makebox(0,0)[t]{\lineheight{1.25}\smash{\begin{tabular}[t]{c}$\frac{a+b}{g}$\end{tabular}}}}%
    \put(0.499995,0.53766754){\color[rgb]{0,0,0}\makebox(0,0)[t]{\lineheight{1.25}\smash{\begin{tabular}[t]{c}$\frac{a\cdot b}{g}$\end{tabular}}}}%
  \end{picture}%
\endgroup%

  \caption{Compatible orientations for $\gamma_2$. Left and right halves $D^L_2$ and $D^R_2$.}
  \label{fig:halves2}
\end{figure}

Finally, we apply the same analysis to \(\gamma_0\), using the saddle move from slope \(-1/p\) to \(-1/(p-1)\) shown in Figure~\ref{fig:saddle0}. Choosing alternating orientations for the parallel copies of \(\alpha_0\) and \(\beta_0\) ensures that orientability is preserved after \(\tfrac{a^2}{g}\) saddle moves, yielding the induced orientations on each half of \(\partial B_0\) depicted in Figure~\ref{fig:final}.
.

\begin{figure}[h]
  \centering
\begingroup%
  \makeatletter%
  \providecommand\color[2][]{%
    \errmessage{(Inkscape) Color is used for the text in Inkscape, but the package 'color.sty' is not loaded}%
    \renewcommand\color[2][]{}%
  }%
  \providecommand\transparent[1]{%
    \errmessage{(Inkscape) Transparency is used (non-zero) for the text in Inkscape, but the package 'transparent.sty' is not loaded}%
    \renewcommand\transparent[1]{}%
  }%
  \providecommand\rotatebox[2]{#2}%
  \newcommand*\fsize{\dimexpr\f@size pt\relax}%
  \newcommand*\lineheight[1]{\fontsize{\fsize}{#1\fsize}\selectfont}%
  \ifx\svgwidth\undefined%
    \setlength{\unitlength}{187.50000721bp}%
    \ifx\svgscale\undefined%
      \relax%
    \else%
      \setlength{\unitlength}{\unitlength * \real{\svgscale}}%
    \fi%
  \else%
    \setlength{\unitlength}{\svgwidth}%
  \fi%
  \global\let\svgwidth\undefined%
  \global\let\svgscale\undefined%
  \makeatother%
  \begin{picture}(1,0.86264772)%
    \lineheight{1}%
    \setlength\tabcolsep{0pt}%
    \put(0,0){\includegraphics[width=\unitlength,page=1]{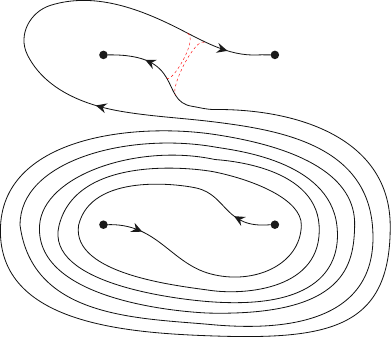}}%
    \put(0.58259751,0.75271562){\makebox(0,0)[lt]{\lineheight{1.25}\smash{\begin{tabular}[t]{l}$\alpha_0$\end{tabular}}}}%
    \put(0.56573935,0.34177472){\makebox(0,0)[lt]{\lineheight{1.25}\smash{\begin{tabular}[t]{l}$\beta_0$\end{tabular}}}}%
    \put(0.28176648,0.72745618){\makebox(0,0)[lt]{\lineheight{1.25}\smash{\begin{tabular}[t]{l}$\beta_0$\end{tabular}}}}%
    \put(0.27574442,0.29833645){\makebox(0,0)[lt]{\lineheight{1.25}\smash{\begin{tabular}[t]{l}$\alpha_0$\end{tabular}}}}%
  \end{picture}%
\endgroup%

  \caption{Saddle move corresponding to the edgepath $\gamma_0$, which changes the slope from $-\tfrac{1}{p}$ to $-\tfrac{1}{p-1}$.}
  \label{fig:saddle0}
\end{figure}

\begin{figure}[h]
  \centering
\begingroup%
  \makeatletter%
  \providecommand\color[2][]{%
    \errmessage{(Inkscape) Color is used for the text in Inkscape, but the package 'color.sty' is not loaded}%
    \renewcommand\color[2][]{}%
  }%
  \providecommand\transparent[1]{%
    \errmessage{(Inkscape) Transparency is used (non-zero) for the text in Inkscape, but the package 'transparent.sty' is not loaded}%
    \renewcommand\transparent[1]{}%
  }%
  \providecommand\rotatebox[2]{#2}%
  \newcommand*\fsize{\dimexpr\f@size pt\relax}%
  \newcommand*\lineheight[1]{\fontsize{\fsize}{#1\fsize}\selectfont}%
  \ifx\svgwidth\undefined%
    \setlength{\unitlength}{190.03343717bp}%
    \ifx\svgscale\undefined%
      \relax%
    \else%
      \setlength{\unitlength}{\unitlength * \real{\svgscale}}%
    \fi%
  \else%
    \setlength{\unitlength}{\svgwidth}%
  \fi%
  \global\let\svgwidth\undefined%
  \global\let\svgscale\undefined%
  \makeatother%
  \begin{picture}(1,0.90234814)%
    \lineheight{1}%
    \setlength\tabcolsep{0pt}%
    \put(0,0){\includegraphics[width=\unitlength,page=1]{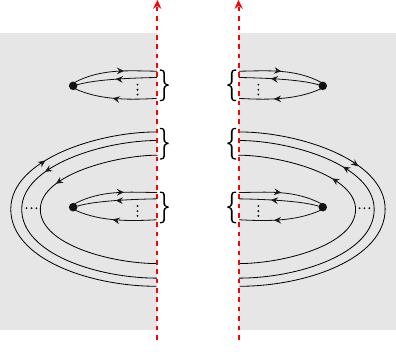}}%
    \put(0.499995,0.68489877){\color[rgb]{0,0,0}\makebox(0,0)[t]{\lineheight{1.25}\smash{\begin{tabular}[t]{c}$\frac{a+b}{g}$\end{tabular}}}}%
    \put(0.20109456,0.01352212){\color[rgb]{0,0,0}\makebox(0,0)[t]{\lineheight{1.25}\smash{\begin{tabular}[t]{c}$D^L_0$\end{tabular}}}}%
    \put(0.7972223,0.01352212){\color[rgb]{0,0,0}\makebox(0,0)[t]{\lineheight{1.25}\smash{\begin{tabular}[t]{c}$D^R_0$\end{tabular}}}}%
    \put(0.499995,0.3766118){\color[rgb]{0,0,0}\makebox(0,0)[t]{\lineheight{1.25}\smash{\begin{tabular}[t]{c}$\frac{a+b}{g}$\end{tabular}}}}%
    \put(0.499995,0.53766754){\color[rgb]{0,0,0}\makebox(0,0)[t]{\lineheight{1.25}\smash{\begin{tabular}[t]{c}$\frac{a\cdot b}{g}$\end{tabular}}}}%
  \end{picture}%
\endgroup%

  \caption{Induced orientations on $\partial B_0$.}
  \label{fig:final}
\end{figure}

This completes the construction and proves that the resulting surface remains orientable throughout the process.

\end{proof}

\begin{theorem}\label{thm:2even-boundaries}
    Given the pretzel knot $K = P(-(n+1),n+1, 2n^2q-n+1 )$ has an essential orientable surface with $2n$ boundary components and slope denominator $q$ where $n$ is an even positive integer, and $q$ is an arbitrary positive integer.
\end{theorem}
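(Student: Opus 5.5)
The statement should follow by specializing Theorem~\ref{thm:surface_on_-ppq} and Theorem~\ref{thm:orientability_of_surfeces_on_-ppq} to $p = n+1$ and to the odd parameter $2n^2q-n+1$ in place of $q$; call this parameter $q'$ to avoid a clash with the target denominator $q$. The plan is to check the hypotheses, evaluate the two gcd formulas, verify the orientability condition, and then upgrade ``incompressible'' to ``essential''.

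\textbf{Hypotheses.} Since $n$ is even and positive, $p=n+1\ge 3$ is odd. Also $q' = 2n^2q - n + 1$ is odd, because $2n^2q$ and $n$ are even. With $a=p-1=n$ and $b=q'-1=2n^2q-n$, the requirement $q'\ge a^2-a+1$ reads
\[
2n^2q-n+1\ge n^2-n+1,
\]
which holds since $q\ge 1$.

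\textbf{Boundary count and denominator.} We have $a+b=2n^2q$, so
\[
\gcd(a,b)=\gcd(n,\,2n^2q-n)=n \quad\text{and}\quad \gcd(a+b,\,2a^2)=\gcd(2n^2q,\,2n^2)=2n^2.
\]
Theorem~\ref{thm:surface_on_-ppq} then gives an incompressible surface $F$ with $2n^2/n=2n$ boundary components. Its slope denominator is $2n^2q/2n^2=q$.

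\textbf{Orientability.} Here $g=\gcd(a,b)=n$ is even and $(a+b)/g=2nq$ is even, so Theorem~\ref{thm:orientability_of_surfeces_on_-ppq} applies. The proof of that theorem also uses that $a/g$ and $b/g$ are odd. This holds here, since $a/g=1$ and $b/g=2nq-1$. So $F$ is orientable. Moreover, $F$ is built from $s=(a+b)/\gcd(a,b)$ sheets, which is the minimal admissible number of sheets, so $F$ is connected.

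\textbf{Essentiality.} This is the step needing the most care, since Theorem~\ref{thm:surface_on_-ppq} only asserts incompressibility. $E(K)$ is irreducible with torus boundary. For such manifolds, a standard argument shows that an orientable incompressible surface which is not a boundary-parallel annulus is also $\partial$-incompressible: a $\partial$-compressing disk either yields a compressing disk or forces $F$ to be an annulus. Since $F$ has $2n\ge 4$ boundary components, it is not an annulus. Hence it is neither boundary-parallel nor $\partial$-compressible, and $F$ is essential with the required parameters.
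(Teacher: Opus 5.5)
Your proposal is correct and follows the paper's proof: you specialize Theorem~\ref{thm:surface_on_-ppq} to $a=n$, $b=2n^2q-n$, and read off $\gcd(a,b)=n$ and $\gcd(a+b,2a^2)=2n^2$, which give $2n$ boundary components and slope denominator $q$. The paper stops there, so your remaining steps fill in what it leaves implicit: the check of the hypotheses ($p=n+1\ge 3$ odd, $q'$ odd, $q'\ge a^2-a+1$), the orientability criterion of Theorem~\ref{thm:orientability_of_surfeces_on_-ppq} ($g=n$ and $2nq$ both even), connectedness from the minimal sheet count, and the upgrade from incompressible to essential via the standard torus-boundary lemma.
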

\begin{proof}

We apply Theorem \ref{thm:surface_on_-ppq} using $a =n$ and $b = 2n^2q-n$.
Observe first that $gcd(a,b) = n$ and $gcd(a+b, 2a^2) =  gcd(2n^2q, 2n^2)=2n^2$.
So, the number of boundary components is the quotient $gcd(a+b, 2a^2)/gcd(a,b) = 2n^2/n = 2n$, as claimed.
And the denominator is the quotient $(a+b)/gcd(a+b, 2a^2) = 2n^2q/2n^2 =  q$, which concludes the proof of our statement.
\end{proof}


The following result can replace the previous theorem: the pretzels are genus one knots.
\begin{theorem}\label{thm:2odd-boundaries-simplier}
    Given the pretzel knot, $K = P(-(n+1),n+2, n(2nq+1)+2 )$ has an essential orientable surface with $2n$ boundary components and slope's denominator $q$ where $n$ is an odd positive integer, and $q$ is an arbitrary positive integer.
\end{theorem}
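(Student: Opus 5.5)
We cannot simply invoke Theorem~\ref{thm:surface_on_-ppq}, because the first two tangles $-1/(n+1)$ and $1/(n+2)$ are not negatives of each other. The plan is to redo its argument for this specific pretzel knot: find an explicit edgepath system in $\mathcal{D}$ satisfying (E1)--(E4), read off the number of sheets and the slope denominator from the twist, and then check incompressibility and orientability. Write $r=n(2nq+1)+2$; it is odd, as is $n+2$, while $n+1$ is even. The final choices of edges are not yet certain, and the key arithmetic step below has not been checked.

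\textbf{Step 1: ansatz and linear system.} Each $\gamma_i$ will be a single (possibly partial) edge of $\mathcal{D}$, moving leftward from $\langle -1/(n+1)\rangle$, $\langle 1/(n+2)\rangle$ and $\langle 1/r\rangle$. As a first attempt I take $\gamma_1\colon\langle 1/(n+2)\rangle\to\langle 0/1\rangle$ and $\gamma_2\colon\langle 1/r\rangle\to\langle 0/1\rangle$, as in the proof of Theorem~\ref{thm:surface_on_-ppq}, with a neighbouring edge for $\gamma_0$. In $(a,b,c)$ coordinates I impose equal $a$-coordinates $s$, equal $b$-coordinates, and $c$-coordinates summing to zero, and solve for the weights $x_i,y_i$.

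\textbf{Step 2: the main obstacle.} With $\gamma_0\colon\langle -1/(n+1)\rangle\to\langle -1/n\rangle$, the equations force
\[
y_0=\frac{s\,(n^2+n+1-r)}{n+r},
\]
which is negative for our $r$. So the naive analogue fails, and I expect this to be the hardest step. What I would try first is to let $\gamma_0$ run along a different edge at $\langle -1/(n+1)\rangle$ that raises its $b$-coordinate. I would also allow one path to use a vertical edge or to stop at an interior point, which (E1) and (E4) permit. The goal is a one-parameter family of solutions in which the denominator $r-1=n(2nq+1)+1$ enters as $b=q-1$ did before. The specific value of $r$ should be exactly what makes the minimal solution have fractional lengths with common denominator $(n+r)/\gcd$. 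It should also make the total twist, reduced modulo $1$, have denominator exactly $q$, say through a factor $2n^2q$ cancelling against $2n$ in the sheet count.

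\textbf{Step 3: invariants.} Once a nonnegative solution is found, I compute the number of sheets $s$ as the least common denominator of the fractional lengths $|\gamma_i|$. The twist $\tau$ is twice the signed lengths, and by \cite{HO} the slope satisfies $\operatorname{slope}(F)\equiv\tau\pmod 1$, which gives the slope denominator $d$. Then $|\partial F|=s/d$, and I verify by arithmetic with $n$ odd that $s/d=2n$ and $d=q$.

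\textbf{Step 4: essentiality and orientability.} Incompressibility follows from Corollary~\ref{cor:HO-2.5} once the cycle of $r$-values of the final edges avoids $(0,\dots)$, $(1,\dots,1,r_n)$ and $(1,\dots,1,2,r_n)$. I expect it to be of the form $(1,n+1,\dots)$ or similar, which is not exceptional. For orientability I repeat the proof of Theorem~\ref{thm:orientability_of_surfeces_on_-ppq}: give the parallel arcs alternating orientations so that every saddle joins opposite sides, then check that the induced orientations on the three disks $B_i\cap B_j$ match. This needs the relevant block sizes to have the right parities, and here the hypothesis that $n$ is odd, so that $n+1$ is even, should play the role that the condition $g$ even played there. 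Since $b=2n$ is even, Proposition~\ref{prop:odd} gives no obstruction, and the slope is nonzero when $q\ge2$, consistent with Proposition~\ref{prop:non-separating}.
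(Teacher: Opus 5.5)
Your outline matches the paper's proof, which is itself only sketched. But your proposal stops exactly where the content of the theorem lies: no admissible edgepath system is ever produced, so Steps 3 and 4 have nothing to act on.

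Your Step 2 computation is correct; that ansatz needs $r\le n^2+n+1$. Your first repair cannot work, though. The only other leftward edge at $\langle -1/(n+1)\rangle$ goes to $\langle 0\rangle$, and with it (E3) forces $r-1=n(n+1)$. The missing idea is your second suggestion, applied to $\gamma_0$ and combined with redirecting $\gamma_1$:

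\begin{itemize}
\item Let $\gamma_0$ be constant at a point of the horizontal edge $\langle -1/(n+1),-1/(n+1)\rangle^\circ$. It then consists of $m$ arc-pairs plus $j$ circles of slope $-1/(n+1)$. A circle of slope $p/q$ contributes no sheet, $q$ to $b$ and $p$ to $c$, so circles are what raise $b$.
\item Send $\gamma_1$ up the edge toward $\langle 1/(n+1)\rangle$ instead of toward $\langle 0\rangle$.
\item Keep $\gamma_2\colon\langle 1/r\rangle\to\langle 0\rangle$.
\end{itemize}

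Write $\gamma_1=x_1\langle\frac1{n+2}\rangle+y_1\langle\frac1{n+1}\rangle$ and $\gamma_2=x_2\langle\frac1r\rangle+y_2\langle\frac01\rangle$. The conditions (equal sheets $m$, equal $b$, $c$'s summing to $0$) become
\[
x_2=j,\qquad nm+(n+1)j \;=\; nm+x_1 \;=\; (r-1)x_2 .
\]
Hence $x_1=(n+1)j$ and $nm=(r-n-2)j=2n^2q\,j$; this is precisely where the value of $r$ enters. The primitive solution is $j=x_2=1$, $m=2nq$, $x_1=n+1$, $y_1=2nq-n-1\ge 0$, $y_2=2nq-1$. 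Since $\gamma_1$ rises and $\gamma_2$ falls, $|\tau|=2(y_2-y_1)/m=1/q$. This gives slope denominator $q$ and $|\partial F|=m/q=2n$. As a check, for $n=1$, $q=2$ this is the candidate in $P(-2,3,7)$ with half-integral slope, namely $37/2$.

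Your Step 4 then fails as planned. A constant $\gamma_0$ has $r$-value $q-v=0$, so the cycle is $(0,1,r-1)$. That is the first exceptional type in Corollary~\ref{cor:HO-2.5}, so incompressibility cannot be read off from that corollary, contrary to what the paper's sketch asserts. This is unavoidable: because $r-1>n(n+1)$, a short case check shows that no type~I system on this knot has a nonconstant $\gamma_0$. You need the finer Hatcher--Oertel analysis of this exceptional case, or a separate argument.

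Orientability must also be redone. $S_0$ now contains a disk bounded by a circle, which the alternating-orientation scheme of Theorem~\ref{thm:orientability_of_surfeces_on_-ppq} does not cover. ``$n$ odd'' by itself is not an argument.
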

\begin{proof}
    We employ a construction analogous to that in Theorem \ref{thm:surface_on_-ppq}, applying the algorithm of Hatcher and Oertel \cite{HO}.
    Consider the pretzel knot $K = P(p_1, p_2, p_3)$ with parameters $p_1 = -(n+1)$, $p_2 = n+2$, and $p_3 = n(2nq+1)+2$.
    We construct a candidate surface using a system of edgepaths on the diagram $\mathcal{D}$ for each tangle.
    
    Similar to the proof of Theorem \ref{thm:surface_on_-ppq}, we set up the edgepath system equations (E1)--(E4). 
    Let $x_i, y_i$ be the coordinates determining the edgepaths in the respective tangles. 
    By solving the consistency equations for the number of sheets and the vertical coordinates (Condition E3), we obtain a solution space generated by a minimal integer solution.
    
    Substituting the specific values of $p_1, p_2, p_3$ given in the theorem statement into the formula for the number of boundary components $|\partial F|$ and the boundary slope derived from the total twist $\tau$, we verify the following:
    \begin{enumerate}
        \item The number of boundary components $|\partial F|$ simplifies to $2n$.
        \item The denominator of the boundary slope simplifies to $q$.
    \end{enumerate}
    Furthermore, since the cycle of $r$-values for the final edges avoids the exceptional cases in Corollary \ref{cor:HO-2.5}, the surface is incompressible.
    The orientability is ensured by the parity of the parameters, as $n$ is odd, which complements the even case in Theorem \ref{thm:2even-boundaries}.
    Thus, the triple $(g, 2n, q)$ is realized for some genus $g$.
\end{proof}

\section{Concluding Remarks and Future Directions}

In this paper, we have established a fundamental existence theorem for the realization problem of essential surfaces in the $3$-sphere. Specifically, we proved that for any even integer $b \ge 2$ and any integer $q \ge 1$, there exists a knot $K \subset S^3$ such that the triple $(g, b, q)$ is realized by an essential surface for some genus $g \ge 0$. Combined with the parity constraints established in Propositions \ref{prop:odd} and \ref{prop:non-separating}, which force $q=1$ when $b$ is odd, these results provide a comprehensive qualitative picture of how the parity of the number of boundary components constrains the possible boundary slopes.

However, our constructive approach using pretzel knots leaves several quantitative and qualitative questions open. We conclude by proposing the following directions for future research.

\subsection{Minimality of Genus}
While Theorem \ref{main} asserts the existence of \textit{some} genus $g$, the pretzel knots utilized in our construction may not necessarily yield the minimal possible genus for a given $(b, q)$. Determining the lower bound of the genus required for realization remains a natural next step.

\begin{problem}For a given even integer $b \ge 2$ and an integer $q \ge 1$, determine the minimum genus $g_{\min}(b, q)$ such that the triple $(g_{\min}(b, q), b, q)$ is realizable in $S^3$.\end{problem}

\subsection{Universal Bounds on Euler Characteristic}
Quantitative inequalities relating the Euler characteristic $\chi(F)$ to $b$ and $q$ have been established for specific classes, such as alternating knots and Montesinos knots. It is of significant interest to determine whether these class-specific bounds can be generalized to a universal function.

\begin{problem}
Does there exist a universal function $f(b, q)$ such that the inequality $-\chi(F) \ge f(b, q)$ holds for any essential surface $F$ with $b$ boundary components and boundary slope denominator $q$ in a knot exterior? 
\end{problem}


\printbibliography

\end{document}